\theoremstyle{plain}
\def\tr{\operatorname{trace}\,}               
\def\sp{\operatorname{sp}\,}
\newtheorem{defi}{Определение}[section]
\newtheorem{theor}{Теорема}[section]
\newtheorem{lem}{Лемма}[section]
\newtheorem{rem}{Замечание}[section]
\newtheorem{exam}{Пример}[section]
\newtheorem{prop}{Предложение}[section]
\newtheorem{cor}{Следствие}[section]
\theoremstyle{definition}
\title[Собственные числа ядерных операторов]
{След, детерминант и собственные числа \\
 ядерных операторов}
\author{О.И. Рейнов}
\address{Санк-Петербургский государственный университет
}
\email{orein51@mail.ru}
\thanks{
AMS Subject Classification 2010: 47B10, 47A75}
\thanks{${ }$ Key words:  ядерный оператор, след, детерминант, собственное число, квазинорма, тензорное произведение}
\begin{document}

\begin{abstract}
Показывается, как новые результаты в теории детерминантов и следов,
а также в теории квазинормированных тензорных произведений 
могут быть применены для получения новых теорем о распределении 
собственных чисел ядерных операторов в банаховых пространствах
и о совпадении спектральных и ядерных следов таких операторов. В качестве
примеров рассматриваются новые классы операторов — обобщенные ядерные
операторы Лоренца-Лапресте $N_{(r,s),p}.$
\end{abstract}


  \maketitle

\tableofcontents
 \medskip


\section{Введение}
 
 В 1950-х В. Б. Лидский \cite{Lid} и А. Гротендик \cite{Gr} независимо получили знаменитые формулы следа
 для некоторых классов ядерных операторов (В. Б. Лидский ~--- в гильбертовых пространствах $H,$
 А. Гротендик ~--- в общих банаховых пространствах $X$): {\it ядерный след соответствующего оператора равен
 его спектральному следу}.
 Напомним, что к классу ядерных операторов
 в $X$ принадлежат операторы $T: X\to X,$ которые допускают представления вида
\begin{equation}\nonumber
T(x)=\sum_{k=1}^\infty \lambda_k x'_k(x) x_k\ \text{ для }\ x\in X,
 \end{equation}
 гле числа $\lambda_k,$ функционалы $x'_k\in X^*$ и элементы $x\in X$ удовлетворяют некоторым условиям суммируемости
 (при этом $\sum |\lambda_k|<\infty).$

Напомним только некоторую информацию из конечномерной теории.

  Для всякого конечномерного оператора 
\begin{equation}\nonumber
T: X\to X,\ Tx=\sum_{k=1}^N x'_k\otimes x_k
  \end{equation}
ядерный след $\tr T:= \sum_{k=1}^N x'_k(x_k)$ вполне определен и не зависит
  от представления $T.$
  Также вполне определен детерминант оператора $1-T:$
\begin{equation}\nonumber
det (1-T)= \prod_j (1-\mu_j),
  \end{equation}
  где $(\mu_j)$ --- полный набор собственных чисел оператора $T.$
  В этом случае, естественно, имеем формулу следа 
\begin{equation}\nonumber
\tr T=\sum_j \mu_j.
  \end{equation}

 Для получения формулы в случае ядерных операторов надо научиться продолжать функционалы "след"
и "детерминант" с множества конечномерных операторов на соответствующие пространства ядерных операторов.
Такое продолжение, в частности, --- цель работы.
  
{\it Доказательства основных теорем о спектральных свойствах ядерных операторов
основано как раз на возможности этих продолжений}.


\section{Предварительные сведения}

Вся терминология и факты (в настоящее время, классические), приводимые здесь без каких-либо
объяснений, могут быть найдены в \cite{Goh,Gr,PiOP,PiEig}.

Пусть $X,Y$ --- банаховы пространства. Для банахова сопряженного к $X,$ мы используем обозначение $X^*.$
  Если $x\in X$ и$x'\in X^*,$ то мы используем обозначение $\langle x',x\rangle$ для $x'(x).$

 Обозначим через $X^*\widehat\otimes Y$
  пополнение тензорного произведения $X^*\otimes Y$  (рассматриваемого как линейное пространство
  всех конечномерных операторов из $X$ в $Y$) по норме
\begin{equation}\nonumber
  ||w||:= \inf \{\left(\sum_{k=1}^N ||x'_k||\, ||y_k||\right):\ w=\sum_{k=1}^N x'_k\otimes y_k\}
\end{equation}
см., например, \cite{Gr,PiEig}).
  Для $X=Y,$ естественные непрерывный линейный функционал "trace" на $X^*\otimes X$ 
имеет единственное непрерывное продолжение
 cна пространство $X^*\widehat\otimes X,$ которое мы также будем обозначать "trace".

   Положим $N(X,Y):= $ образ тензорного произведения  $X^*\widehat\otimes Y$ в пространстве $L(X,Y)$ 
   всех ограниченных линейных отображений при каноническом фактор отображении
  $X^*\widehat\otimes Y\to N(X,Y)\subset L(X,Y).$ Мы рассматриваем (гротендиковское) пространство
 $N(X,Y)$ всех ядерных операторов из
   $X$ в $Y$ с естественной нормой, индуцированной из  $X^*\widehat\otimes Y.$
 Для тензорного элемента $u\in X^*\widehat\otimes Y,$ мы обозначаем через $\widetilde{u}$ соответствующий ядерный оператор из $X$ в $Y.$
 Иногда норму проективного тензорного произведения обозначают через $\pi,$
 а инъективную норму (т. е. норму, индуцированную обычной операторной нормой --- через $\varepsilon$). Поэтому, например,
 $X^*\widehat\otimes_\pi Y= X^*\widehat\otimes Y$ и $X^*\widehat\otimes_\varepsilon Y$ --- замыкание множества конечномерных операторов в $L(X,Y).$
  
 Примеры ядерных операторов (о квазинормах см. информацию ниже). Напомним их общий вид:
  
\begin{equation}\nonumber
T(x)=\sum_{k=1}^\infty \lambda_k x'_k(x) x_k\ \text{ для }\ x\in X,
\end{equation}
  
  Например, если $0<s\le1,$ $\sum |\lambda_k|^s<\infty$ и $\{x'_k\}, \{x_k\}$ ограничены, то
  $T\in N_s(X)$ ($s$-ядерный оператор с естественной квазинормой). 
  
  Более общо, если $(\lambda_k)\in l_{s,u}, 0<u\le1$ (пространство Лоренца), то
  $T\in N_{s,u}(X)$ ($l_{s,u}$-ядерный оператор с естественной квазинормой). 
  
  Если
  $0<r\le1, 1\le p\le2,$  $(\lambda_k)\in l_r,$ т. е. $\sum |\lambda_k|^r<\infty,$ 
  $\{x'_k\}$ ограничена и $(x_k)\in l^{w}_{p'}(X)$ (см. ниже), т. е. 
  для всякого $x'\in X^*$ ряд
  $\sum |x'(x_k)|^{p'}$ сходится, то $T\in N_{r,p}(X)$ ($(r,p)$-ядерный с естественной квазинормой).

 {\it Ядерный след}\, оператора $T$ определяется как сумма ряда: 
\begin{equation}\nonumber
\tr T:= \sum \lambda_k x'_k(x_k),
\end{equation}
  {\it спектральный след}\,  оператора $T$ ~--- как сумма $\sum \mu_n,$ где $\{\mu_n\}$ ~---
  последовательность всех собственных чисел $T.$
  
  Ядерный след определен не для каждого ядерного оператора. В условиях теоремы Лидского он определен всегда,
  а в условиях теоремы Гротендика ~--- для случая, когда $\sum |\lambda_k|^{2/3}<\infty.$
  
 Напомним общий вид проективного тензорного элемента:
\begin{equation}\nonumber
z=\sum_{k=1}^\infty \lambda_k y'_k\otimes x_k\in Y^*\widehat\otimes X
\end{equation}
  Сопряженное к  $Y^*\widehat\otimes X$ пространство есть $L(X,Y^{**}).$ 
 Двойственность задается следом.
 
 Рассмотрим функционал $\widetilde T\in (Y^*\widehat\otimes X)^*,$ определяемый оператором
  $T\in L(X,Y^{**}).$ Имеем:
  
\begin{equation}\nonumber
\langle\widetilde T, z\rangle:= \tr T\circ z=
  \sum_{k=1}^\infty \lambda_k Tx_k (y'_k)
\end{equation}
 для 
 $z=\sum_{k=1}^\infty \lambda_k y'_k\otimes x_k\in Y^*\widehat\otimes X.$
  
  Для $q\in (0,+\infty],$ мы обозначаем через $l_q^w(X)$ пространство всех слабо $q$-суммируемых последовательностей
  $(x_i)\subset X$ см., например, \cite{PiOP,PiEig}) с квазинормой
\begin{equation}\nonumber
   \varepsilon_q((x_i)) := \sup \{\left(\sum_i |\langle x', x_i\rangle|^q\right)^{1/q}:\ x'\in X^*,\, ||x'||\le1\}
\end{equation}
  (в случае, когда $q=\infty,$ мы предполагаем, что $(x_i)$ --- просто ограниченная, т. е.
  $\varepsilon_\infty((x_i))=\sup_i ||x_i||$).

Пространство 
Лоренца $l_{p,q}$ $(0<p<\infty, 0\le\infty)$
состоит из последовательностей $\alpha:=(\alpha_n)\in c_0,$ для которых
\begin{equation}\nonumber
||\alpha||_{p,q}:= \left(\sum_{n\in\mathbb N} \alpha_n^{*q} n^{q/p-1}\right)^{1/q}<+\infty\
\text{при}\ q<\infty \ \text{и}
\end{equation}
\begin{equation}\nonumber
||\alpha||_{p\infty}:= \sup_{n\in\mathbb N} \alpha^*_n n^{1/p}<+\infty,
\end{equation}
где $(\alpha^*_n)$ есть неубывающая перестановка последовательности $\alpha,$
$n$-й элемент $\alpha^*_n$ которой определяется так:
\begin{equation}\nonumber
\alpha^*_n:= \inf_{|J|<n} \sup_{j\notin J} |\alpha_j|.
\end{equation}
С указанными квазинормами пространства $l_{p,q}$ являются полными квазинормированными пространствами.
При $p=q<\infty$ получаем пространства $l_p.$ 

  Еще несколько стандартных обозначений:
  $l_p(X)$ --- пространство абсолютно суммируемых последовательностей из $X,$ 
$L(X):= L(X,X),$\, $\Pi_p$ --- идеалы абсолютно $p$-суммирующих операторов, $N_s$ (для $s\in(0,1])$
--- квазинормированный идеал $s$-ядерных операторов (см. ниже более общее определения операторов
из квазинормированного идеала $N_{r.p}).$
Норма в банаховом пространстве $X$ обозначается обычно просто $||\cdot||,$ но если необходимо
подчеркнуть, в каком пространстве берется норма, то мы пишем $||\cdot||_X.$
Для последовательностей элементов некоторого множества используются обозначения типа:
$(x_k), (x_k)_k,\, (x_k)_{k=1}\infty, \{x_k\}$ и т.д.

Понятие детерминанта (Фредгольма) появится в своем месте.
Отметим только, что для элемента $u\in X^*\widehat\otimes X$ его детерминант Фредгольма
есть целая функция 
\begin{equation}\nonumber
\operatorname{det}\,(1-zu)=1-\tr u z+\dots
\end{equation}
с нулями, равными $1/\mu_k(\tilde{u}),$ --- обратным к ненулевым 
собственным значениям (каждое взятое с учетом кратности) оператора $\tilde{u}$ (см. \cite{Gr}).

\section{Основные определения и факты} \label{S:main1}

\subsection{Квазинормы и операторные идеалы} \label{S:QN1}
 Наше определение квазинормы несколько нестандартно.
Пусть $\alpha$ --- функция на некотором векторном пространстве $E,$ $\alpha: E\to \widehat{\mathbb R}.$ 
Мы говорим, что $\alpha$ есть {\it квазинорма} на $E,$ если
1)\,
$\alpha(E)\subset [0,+\infty]$ и
$\alpha(x)=0$ влечет $x=0;$
2)\,
 существует такая постоянная $C>0$ что
 $\alpha(x+y)\le C\, [\alpha(x)+\alpha(y)]$ для $x,y\in E;$
 3)\,
 $\alpha(ax)=|a|\, \alpha(x)$ for $a\in \mathbb K, x\in E.$
 
  \begin{defi}[]
 Пусть дана пара $(E, \alpha),$ где $\alpha$ есть квазинорма на векторном пространстве $E,$
 {\rm(i)}\
 квазинормированным пространством, ассоциированным с парой $(E, \alpha)$ называется,
квазинормированное векторное пространство
\begin{equation}\nonumber
 E_\alpha:= \left\{x\in E:\ \alpha(x)<\infty\right\}.
\end{equation}
 {\rm(ii)}\,
квазинормированным пространство $E_\alpha$ полно (= квази-банахово пространство), если
каждая последовательность Коши в $E_\alpha$\, $\alpha$-сходится к некоторому элементу из $E_\alpha.$
  \end{defi}
 
Отметим, что $E_\alpha$ является квазинормированным пространством в смысле книги \cite[p. 159]{KOT}
мы можем рассматривать соответствующую топологию (см. \cite[p. 159-160]{KOT},
  \cite[p. 445]{BenLi}).
 
 \begin{rem}
{\rm 1)\,
Вполне может быть, что выполняется равенство $E_\alpha=E.$
 
 2)\,
Хорошо известно \cite[p. 445]{BenLi},
что если $E_\alpha$ --- квазинормированное пространство,
то существует число $\beta\in (0,1]$ и $\beta$-норма $||\cdot||$
на $E_\alpha,$ эквивалентная квазинорме $\alpha.$
Напомним, что $\beta$-норма на векторном пространстве $F$
это квазинорма $||\cdot||: F\to \mathbb R,$ для которой
при всех $x,y\in F$ выполняется следующее $\beta$-неравенство треугольника:
$||x+ y||^\beta\le ||x||^\beta+||y||^\beta.$}
\end{rem}
 \smallskip
 
Напомним, что операторный идеал $\mathbb A:= (A(X,Y): X,Y \ \text{--- банаховы пространства})$
 есть подкласс класса всех линейных ограниченных операторов, компоненты $A(X,Y)\subset L(X,Y)$
 которого удовлетворяют следующим условиям:
  
 $(O_i)$\,
 $1_K\in \mathbb A,$ где $K$ обозначает одномерное банахово пространство;
 
 $(O_{ii})$\,
 Если $U,V\in A(X,Y), $ то $a_1U+a_2V\in A(X,Y)$ для всех скалярных $a_1,a_2.$
 
 $(O_{iii})$\.
 Если $S\in L(Z,X),$ $U\in A(X,Y)$ и $T\in L(Y,W),$ то $TUS\in A(Z,W.)$
 \medskip
 
 Операторный идеал $\mathbb A$ называется квазинормированным, если на нем определен класс $a$ квазинорм
 которые (обозначим их снова $a$) на компонентах являются квазинормами. обладающими свойством
 
 $(O_{iv})$\,
 $a(1_K)$=1
 
 $(O_{v})$\,
 Eсли $S\in L(Z,X),$ $U\in A(X,Y)$ и $T\in L(Y,W),$ то $a(TUS)\le ||T||\, a(U)\, ||S||.$
 
\subsection{Проективные квазинормы и свойства аппроксимации} \label{S:AP}
Теперь, пусть $\alpha$ --- квазинорма на проективном тензорном произведении
$X\widehat\otimes Y$ такая, что 
$\alpha(x\otimes y)=||x||\, ||y||$ for $x\in X, y\in Y.$
Ассоциированное квазинормированное тензорное произведение (которое мы будем обозначать 
через $X\widehat\otimes_\alpha Y$ и называть "$\alpha$-проективным тензорным произведением")
есть $\alpha$-замыкание алгебраического тензорного произведения $X\otimes Y$ в $(X\widehat\otimes Y)_\alpha$
(в конкретных случаях мы будем использовать некоторые специфические обозначения).
Таким образом,
\begin{equation}\nonumber
 X\widehat\otimes_\alpha Y:= \left\{u\in X\widehat\otimes Y:\ \alpha(u)<\infty \ \text{ and }\
    \exists\, (u_n)\subset X\otimes Y:\ \alpha(u-u_n)\underset{n\to\infty}\to 0\right\}.
\end{equation}
Более общо:
\begin{defi}[]
{\rm (i)}\,
Пусть $\widehat\otimes$ обозначает класс всех тензорных элементов проективных тензорных 
произведений произвольных банаховых пространств.
Проективная тензорная квазинорма $\alpha$ есть отображение из $\widehat\otimes$ в $\widehat{\Bbb R}$
такое, что $\alpha$ является квазинормой на каждой компоненте $X\widehat\otimes Y,$ 
обладающей свойствами:
 
$(Q_1)$\,
$\alpha(x\otimes y)=||x||\, ||y||$ для $x\in X, y\in Y.$
 
$(Q_2)$\,
Существует такая постоянная $C>0,$ что
$\alpha(u_1+u_2)\le C\, [\alpha(u_1)+\alpha(u_2)]$
для всех $X, Y$ и $u_1, u_2\in X\widehat\otimes Y.$
 
$(Q_3)$\,
Если If $u\in X\widehat\otimes Y,$\, $A\in L(X, E)$ и $B\in L(Y,F),$ то
$\alpha(A\otimes B (u))\le ||A||\, \alpha(u)\, ||B||.$
 
$(Q_4)$\,
Для всех $X, Y$ тензорное произведение $X\otimes Y$ плотно в $X\widehat\otimes_\alpha Y.$
 
{\rm (ii)}\,
Проективная тензорная норма $\alpha$ называется полной, если каждое
 $\alpha$-проективное тензорное произведение $X\widehat\otimes_\alpha Y$ является полным, то ксть
квази-банаховым.
\end{defi}
 
Для каждой проективной тензорной квазинормы $\alpha$ существует $\beta\in (0,1]$
и эквивалентная $\beta$-норма $||\cdot||_\beta$ на $\widehat\otimes$
такие, что $X\widehat\otimes_\alpha Y= X\widehat\otimes_{||\cdot||_\beta} Y$
(т. е., существует квазинорма $||\cdot||_\beta$ с $\beta$-неравенством треугольника такая, что
для некоторых положительных постоянных $C_1, C_2$ и для всех проективных тензорных
элементов $u$ выполняются неравенства $C_1 \alpha(u)\le ||u||_\beta\le C_2 \alpha(u)$). 
Таким образом, мы можем предполагать, если нужно, что a priori $\alpha$ есть $\beta$-норма.
 
Мы не будем рассматривать детально свойства введенных объектов здесь.
Однако, нам понадобится  ниже тот факт, что
отображение включения $X\widehat\otimes_\alpha Y\hookrightarrow X\widehat\otimes Y$
непрерывно для всех банаховых пространств $X, Y$
(в основных примерах \ref{E:rpq} ниже это будет автоматически выполнено).
Доказательство можно  найти в работе автора \cite[Proposition 4.1]{Trend}.
 
Так как $X\widehat\otimes_\alpha Y$ есть линейное подпространтво в $X\widehat\otimes Y,$
то пространство $L(Y,X^{*})$ разделяет точки $X\widehat\otimes_\alpha Y.$
Если $u\in X\widehat\otimes_\alpha Y,$ то $u=0$ тогда и только тогда, когда
$\operatorname{trace}\, U\circ u=0$ для
каждого $U\in L(Y,X^{*}).$ В частности, сопряженное пространство к $(X\widehat\otimes_\alpha Y)^*$
разделяет точки $X\widehat\otimes_\alpha Y.$

Ясно, что каждый тензорный элемент $u\in X\widehat\otimes_\alpha Y$ порождает
ядерный оператор $\widetilde u: X^*\to Y.$ Если $X$ является сопряженным пространством, скажем $E^*,$
то мы получаем каноническое отображение $j_\alpha: E^*\widehat\otimes_\alpha Y\to L(E, Y).$
Образ отображения $j_\alpha$ обозначается нами через $N_\alpha (E,Y),$ и снабжается
"$\alpha$-ядерной" квазинормой $\nu_{\alpha}:$ это квазинорма,
индуцированная из $E^*\widehat\otimes_\alpha Y$ фактор-отображением $E^*\widehat\otimes_\alpha Y\to N_{\alpha}(E,Y).$
Если проективная тензорная квазинорма $\alpha$ полна, то $N_\alpha(E,Y)$
является квази-банаховым пространством, а $N_\alpha$ --- квази-банахов операторный идеал.

\begin{defi}[]
Пусть $\alpha$ --- полная проективная тензорная квазинорма . Говорят, что банахово пространство
$X$ обладает свойством аппроксимации $AP_\alpha,$ если
для любого банахова пространства $E$ каноническое отображение $E^*\widehat\otimes_\alpha X\to N_\alpha(E,X)$
взаимно-однозначно (другими словами, если if $E^*\widehat\otimes_\alpha X= N_\alpha(E,X)$).
\end{defi}
 
Заметим, что если $\alpha=||\cdot||_\land,$ то мы получаем классическое свойство аппроксимации $AP$
А. Гротендика \cite{Gr}. Должно быть понятно, что $AP$ влечет the $AP_\alpha$
для любой проективной тензорной квазинормы.

Ниже нам понадобится следующая лемма.

\begin{lem} \label{L:AP}
Банахово пространство $X$ имеет свойство $AP_\alpha$ тогда и только тогда,
когда каноническое отображение $X^*\widehat\otimes_\alpha X\to L(X)$ взаимно-однозначно.
\end{lem}
 
\begin{proof}
Достаточно повторить (слово в слово с теми же обозначениями) доказательство
предложения 6.1 из \cite{Trend}.
 \end{proof}
 
\begin{exam} \label{E:rpq}
Пусть $0<r, s\le1,$ $0< p,q \le\infty$ и $1/r+1/p+1/q=1/\beta\ge1.$
Определим тензорное произведение $X\widehat\otimes_{r,p,q} Y$ как линейное подпространство
проективного тензорного произведения $X\widehat\otimes Y,$ состоящее из всех 
тензорных элементов $z,$ которые допускают представления вида
\begin{equation}\nonumber
z=\sum_{k=1}^\infty \alpha_k x_k\otimes y_k,\
(\alpha_k)\in l_r,\, (x_k)\in l_{w,p}(X),\, (y_k)\in l_{w,q}(Y);     
\end{equation}
мы снабжаем его квазинормой $||z||_{r,p,q}:= \inf ||(\alpha_k)||_r\,
||(x_k)||_{w,p}\, ||(y_k)||_{w,q},$ где инфимум берется по всем представлениям
$z$ в указанной выше форме. 
Отметим, что это тензорное произведение $\beta$-нормировано (cf. \cite{166}, где
рассмотрена версия рассматриваемого тензорного произведения как пополнение
алгебраического тензорного произведения по соответствующей "конечной" 
$||\cdot||_{r,p,q}$-квазинорме).
Оно квази-банахово (о его полноте см. препринт автора
"Approximation properties associated with quasi-normed operator ideals
of $(r,p,q)$-nuclear operators"\footnote{
http://www.mathsoc.spb.ru/preprint/2017/index.html\#08}).
Соответствующий квазинормированный операторный идеал $N_{r,p,q}$ есть квази-банахов 
идеал $(r,p,q)$-ядерных операторов (cf. \cite{PiOP, 166}).
В частных случаях, когда один или двое из показателей $p,q$
равны $\infty,$ мы используем обозначения, близкие к аналогичным обозначениям из \cite{RQ, Trend}
(но здесь мы поменяли $p', q'$ на $p, q):$
Мы обозначаем
$N_{r,\infty,\infty}$ через $N_r,$\,
$N_{r,\infty,q}$ через $N_{[r,q]},$ \,
$N_{r,p,\infty}$ через $N^{[r,p]},$\,
$\widehat\otimes_{r,\infty,\infty}$ через $\widehat\otimes_{r},$\,
$\widehat\otimes_{r,\infty,q}$ через $\widehat\otimes_{[r,q]},$\,
$\widehat\otimes_{r,p,\infty,}$ через $\widehat\otimes^{[r,p]}.$

Соответствующие обозначения используем также для свойств $AP_{r,p,q}:$

(i)\,
Для $p=q=\infty,$ мы получаем $AP_r$ из \cite{Trend}.

(ii)\,
Для $p=\infty,$ получаем $AP_{[r,q]}$ из \cite{RQ, Trend}.

(iii)\,
Для $q=\infty,$ получаем $AP^{[r,p]}$ из \cite{RQ, Trend}.
 
\end{exam}
 
Нам понадобятся некоторые факты о свойствах аппроксимации из примера \ref{E:rpq}.
Соберем их в следующей лемме.

\begin{lem} \label{L:Lp}
{\rm 1)} \cite[Corollary 10]{RQ14}\,
Пусть $s\in (0,1],$ $p\in [1,\infty]$ и $1/s=1+|1/p-1/2|.$
Если банахово пространство изоморфно подпространству фактор-пространства
(или фактор-пространству подпространства)
некоторого $L_p$-пространства, то оно имеет свойство $AP_s.$

{\rm2)} \cite[Corollary 4.1]{RQ}, \cite[Theorem 7.1]{Trend}\,
Пусть $1/r-1/p=1/2.$ Каждое банахово пространство обладает свойствами $AP_{[r,p']}$
и $AP^{[r,p']}.$
\end{lem}

Доказательство утверждения 2) может быть найдено ниже (см. пример\ref{E:main}).
См. также \cite{Trend} для других результатов в этом направлении.

\begin{rem} \label{R:AP}  {\rm
По-существу, доказательства того, что каждое банахово пространство имеет свойство $AP^{[1,2]}$
явно содержится в \cite{PiOP}. Там получено, что это утверждение
(после применения некоторых фактов из комплексного анализа)
влечет формулы типа формул Гротендика-Лидского для операторов из $N^{[1,2]}$
\cite[27.4.11]{PiOP} (и это влечет формулу Лидского для trace-класса операторов
в гильбертовых пространствах и также формулу следа Гротендика для $N_{2/3}$).
С другой стороны, существует весьма простой способ получить эти результаты
о свойствах $AP^{[1,2]}$ и $N^{[1,2]}$ из теоремы Лидского
(см. доказательства теорем \cite[Theorems 7.1-7.3]{Trend} для $p=2).$
}
\end{rem}
 

\subsection{Факторизация через прямые суммы}. \label{S:AP2}
Ниже $X,Y$ --- произвольные банаховы пространства. 
   
 Напомним, что последовательность $(x_k)$ элементов из $X$ называется безусловным базисом, 
 если каждый $x\in X$ единственным образом разлагается в ряд $x=\sum_{k=1}^\infty a_kx_k$ и 
 этот ряд безусловно сходится (сходится при любой перестановке ряда). Это эквивалентно тому,
 что существует акая постоянная $K\ge1,$ что для любого выбора знаков $(t_k)=(\pm 1)$
 выполняется неравенство
 \begin{equation}\nonumber
 ||\sum_{k=1}^\infty t_k a_k x_k||\le K\, ||\sum_{k=1}^\infty a_k x_k||.
\end{equation}
 Безусловная константа базиса $(x_k)$ есть $ub(x_k):=\inf K.$ Таким образом, 1-безусловны базис --- это
 базис, для которого $||x||=||\sum_{k=1}^\infty t_k a_k x_k||$ для всякого $x\in X$ при любом выборе
 знаков $(t_k).$ Базис нормирован, если все его элементы имеют единичную норму.
 
 Если $(x_k)$ --- безусловный базис и $\sigma$ есть подмножество множества натуральных чисел, то естественный
 проектор $P_\sigma: X\to X,$ определяемый формулой
\begin{equation}\nonumber
 P_\sigma(x) := \sum_{k\in\sigma} a_kx_k
\end{equation}
 ограничен и $||p_\sigma||\le ub(x_k)$ (см., например, \cite[p. 18]{LinTz}).
 
 Напомним определение прямой суммы банаховых пространств. 
 Пусть $E$ --- банахово пространство с 1-безусловным нормированным базисом $(e_k)$ и $(X_i)$ ---
 последовательность банаховых пространств. {\it Прямой суммой} этих пространств по типу $E$
 называется банахово пространство $(\sum X_i)_E$ состоящее из последовательностей $(x_i),$
 $x_i\in X_i,$ для которых конечна норма 
 \begin{equation}\nonumber
 ||(x_i)||:= ||\sum_i ||x_i||\, e_i||_E.
\end{equation} 
 Пространство $(\sum X_i)_E$ обладает такими важными свойствами:
 
 $(u_1)$
 Каждое пространство $X_n$ естественным образом изометрически вкладывается в $(\sum X_i)_E$
 и его образ 1-дополняем там, т. е. существует (естественный) непрерывный проектор из $(\sum X_i)_E$ на образ $ X_n$
 и норма этого проектора равна 1. Более того, то же верно, если вместо одного пространства $X_n$
 рассмотреть конечную прямую сумму $(\sum_{i=1}^n X_i)_E$ и соответствующий проектор $P_n.$
 
 [Действительно,
 $||P_n(x_i)_{i=1}^\infty||= ||\sum_{i=1}^n ||x_i||\, e_i||_E\le ||\sum_{i=1}^\infty ||x_i||\, e_i||_E$
 по замечаниям выше.]
 
 $(u_2)$
 Если в каждой из изометрических копий пространств $X_i$ взять по элементу $x_i$
 единичной нормы, то полученная последовательность $(x_i)$ будет образовывать
 последовательность, эквивалентную базису $(e_i).$
   
 Заметим, что определить понятие прямой суммы ("по базису") с теми же хорошими свойствами 
 для пространств в базисами более слабых типов (например, условного) затруднительно
 (цитата из \cite{Kad91}: как ни определяй понятие прямой суммы бесконечномерных
 пространств $X_i$ по последовательности $(e_i),$ не являющейся
 безусловной базисной последовательностью, свойство $(u_2)$ прямой суммы не
 будет выполнено ни в каком смысле).
 
 Ниже, говоря о прямых суммах пространств, мы будем подразумевать (если не задан явно тип суммы), что 
 рассматриваемая сумма берется по типу  $E$ для некоторого пространства $E$ с 1-безусловным базисом.

Пусть $\mathbb Z:=(Z_\alpha)$ --- семейство банаховых пространств, которое с каждой
 парой пространств $Z_1, Z_2$ содержит и их прямую сумму $Z_1\oplus Z_2.$
 Обозначим через $\Gamma_Z$ совокупность всех  операторов, которые факторизуются через пространство из $\mathbb Z:$
 $T\in \Gamma_{\mathbb Z}(X,Y)$ тогда и только тогда, когда существуют пространство $Z\in \mathbb Z$ и операторы 
 $A\in L(X,Z)$ и $B\in L(Z,Y)$ такие, что
 $T=BA: X\overset A\to Z\overset B\to Y.$
 С нормой 
\begin{equation}\nonumber
 \gamma_{\mathbb Z}(T):=\inf \{||A||\, ||B||:\ \exists\, Z\in \mathbb Z, A\in L(X,Z), B\in L(Z,Y);\, T=BA\}
\end{equation}
 пространство $\Gamma_{\mathbb Z}(X,Y)$ нормировано, а $(\Gamma_{\mathbb Z},\gamma_{\mathbb Z})$ 
 есть нормированный операторный идеал.
 
 Действительно, 
 пусть $1_K$ --- тождественный оператор в одномерном пространстве $K$ и $Z\in \mathbb Z.$
 Пусть, далее, $j: K\to Z$ --- какое-либо изометрическое вложение. Продолжим отображение 
 (линейный функционал) $1_K j^{-1}: j(K)\to K$ с подпространства $j(K)\subset Z$ на все $Z$ до
 отображения $J: Z\to K$ с сохранением нормы. Ясно, что $1_K=J j: K\to Z\to K,$
 $\gamma_{\mathbb Z}(1_K)=1.$ Таким образом, выполнены условия $(O_i)$ и $(O_{iv}).$
 
 Проверим линейность (условие $(O_{ii})$). Для $U,V\in \Gamma_{\mathbb Z}(X,Y)$ пусть
 $U=B_1A_1$ и $V=B_2A_2$ --- факторизации этих операторов через пространства $Z_1$ и через $Z_2$ 
 из $\mathbb Z$ соответственно. Рассмотрим прямую сумму $Z:= Z_1\oplus Z_2,$
 обозначив через $j_k$ и $P_k$ естественные изометрические вложения $Z_k\to Z$ и проекторы
  $Z\to Z_k$ $(k=1,2)$ соответственно (так что $P_kj_k=1_{Z_k}$ и $P_1j_2=P_2j_1=0).$ Положим 
\begin{equation}\nonumber
A(\cdot):=(j_1 A_1(\cdot), j_2A_2(\cdot)): X\to Z= Z_1\oplus Z_2
\end{equation}
  и 
\begin{equation}\nonumber
 B(\cdot):= B_1P_1(\cdot)+ B_2P_2(\cdot): Z= Z_1\oplus Z_2\to Y.
\end{equation}
 Для $x\in X$ имеем:
\begin{equation}\nonumber
 BAx= B(j_1A_1x, j_2A_2x)= (B_1P_1+B_2P_2)(j_1A_1x, j_2A_2x)= B_1A_1x + B_2A_2x=Ux+Vx,
\end{equation}
  т. е., $U+V\in \Gamma_{\mathbb Z}(X,Y),$ причем ясно, что 
\begin{equation}\nonumber
  \gamma_{\mathbb Z}(U+V)\le \gamma_{\mathbb Z}(U) +\gamma_{\mathbb Z}(V).
\end{equation}
 Мультипликативность из условия $(O_{ii}$ очевидна, так же как и ясно выполнение
 условий $(O_{iii})$ и $(O_{v}).$
   
 Для полноты операторного идеала нужна сходимость соответствующих рядов. Поэтому
 мы обратимся к частному случаю рассмотренного только что идеала ("подидеалу").

 Пусть теперь $\mathbb Z:=(Z_\alpha)$ --- семейство банаховых пространств.
 замкнутое относительно взятия не более чем счетных прямых сумм
 (напомним, что надо фиксировать банахово пространство с 1-безусловным базисом $E$ и 
 говорить о прямых $E$-суммах).
 Мы рассматриваем снова $\Gamma_Z$ --- 
 идеал операторов, которые факторизуются через пространство из $\mathbb Z$ с нормой, описанной выше.
Пространство $\Gamma_{\mathbb Z}(X,Y)$ банахово, а $(\Gamma_{\mathbb Z},\gamma_{\mathbb Z})$ 
 есть банахов нормированный операторный идеал.
 Действительно, нам надо лишь установить полноту идеала. Для этого мы фиксируем $\varepsilon>o$ и рассмотрим
 сходящийся ряд
\begin{equation}\nonumber
 \sum_{k=1}^\infty \gamma_{\mathbb Z}(T_k)<\infty,
\end{equation}
 где $T_k:=B_kA_k\in \Gamma_{\mathbb Z}(X,Y),$ $A_k: X\to Z_k$ и $B_k: Z_k\to Y$
 для некоторых пространств $Z_k\in \mathbb Z.$ Будем считать, что 
\begin{equation}\nonumber
 ||A_k||\le (1+\varepsilon) \gamma_{\mathbb Z}(T_k)^{1/2}
\end{equation}
 и
\begin{equation}\nonumber
 ||B_k||\le (1+\varepsilon) \gamma_{\mathbb Z}(T_k)^{1/2}.
\end{equation}
 Покажем, что ряд $\sum T_k$ сходится в $\Gamma_{\mathbb Z}(X,Y).$
 
 Положим $Z:=(\sum Z_k)_E\in \mathbb Z.$ 
 Для каждого $k$ пусть $j_k$ и $P_k$ --- изометрическое вложение $Z_k\to Z$ и проектор
 $Z\to Z_k$ соответственно такие, что $1_{Z_k}=P_kj_k,$ $||P_k||=$ (ср. с тем,, как подобное было
 проделано выше). Определим операторы $A:X\to Z$ и  $B:Z\to Y$ равенствами 
\begin{equation}\nonumber
 A:=\sum_{k=1}^\infty j_kA_k,\ B:= \sum_{k=1}^\infty B_kP_k
\end{equation}
 Так как $\sum ||j_kA_k||\le \sum (1+\varepsilon)\gamma_{\mathbb Z}(T_k)^{1/2}$ и
 $\sum ||B_kP_k||\le \sum (1+\varepsilon)\gamma_{\mathbb Z}(T_k)^{1/2},$ то эти операторы
 вполне определены, причем
\begin{equation}\nonumber
 ||BA||=||\sum_{k=1}^\infty B_kP_kj_kA_k||=||\sum_{k=1}^\infty B_kA_k||\le
 \sum_{k=1}^\infty ||B_k||\,||A_k||\le (1+\varepsilon) \sum_{k=1}^\infty \gamma_{\mathbb Z}(T_k).
\end{equation}
 Отсюда заключаем, что $T=BA=\sum T_k,$ т. е., наш ряд сходится  в $\Gamma_{\mathbb Z}(X,Y)$
 и, следовательно, пространство $\Gamma_{\mathbb Z}(X,Y)$ полно.

\subsection{Спектральный тип}
Пусть $T$ --- оператор в $X,$
все ненулевые собственные значения которого есть собственные числа конечной (алгебраической)
кратности и которые не имеют предельных точек кроме, быть может, нуля.
Положим $\lambda(T)= \left\{\lambda\in \text{eigenvalues}\,(T)\setminus\left\{0\right\}\right\}$
(собственные числа $T$ берутся в соответствие с их алгебраической кратностью).
Мы говорим что оператор $T\in L(X,X)$ имеет {\it спектральный тип $l_{p,q},$}
если последовательность собственных чисел $\lambda(T):= (\lambda_k(T))$
лежит в пространстве Лоренца $l_{p,q}.$ Если $T$ --- спектрального типа $l_1,$ 
то мы можем определить {\it спектральный след}\, оператора $T:$\,
$\operatorname{sp\,tr\,}(T):= \sum \lambda_k(T).$
Говорим, что подпространство $L_1(X,X)\subset L(X,X)$ --- {\it спектрального типа $l_{p,q},$}
если каждый оператор $T\in L_1(X,X)$ имеет спектральный тип $l_{p,q},$
Напомним, что операторный идеал $\mathfrak A$ имеет спектральный тип $l_{p,q},$
если каждая его компонента $\mathfrak A(X,X)$ спектрального типа $l_{p,q},$

\begin{defi}
Пусть $\alpha$ --- проективная квазинорма.
Тензорное произведение $X\widehat\otimes_\alpha X$ имеет спектральный тип $l_{p,q},$
если пространство $N_\alpha(X,Y)$ есть пространство спектрального типа $l_{p,q},$
Проективная тензорная квазинорма $\alpha$ (или тензорное произведение $\widehat\otimes_\alpha)$
--- спектрального типа $l_{p,q},$ если соответствующий операторный идеал
$N_\alpha$ имеет спектральный тип $l_{p,q},$
\end{defi}
 
\begin{exam} \label{E:sptr}
$N_1(H)$\, ($= N_{[1,2]}(H)= N^{[1,2]}(H)= S_1(H),$
(trace-класс операторов в гильбертовом пространстве) --- спектрального типа $l_1$ \cite{Weyl}.
$\widehat\otimes_{2/3}$ и $N_1\circ N_1$ --- спектрального типа $l_1$ \cite{Gr}.
$N^{[1,2]}$ --- спектрального типа $l_1$ (см. \cite[see 27.4.9, конец доказательства]{PiOP}).
$N_{[1,2]}$ --- спектрального типа $l_1$ (см. \cite[Theorem 7.2 для $p=2$]{Trend};
это следует из предыдущего утверждения. Более общо, если $1/r-1/p=1/2,$
то $\widehat\otimes_{[r,p]}= N_{[r,p]},$ $\widehat\otimes^{[r,p]}= N^{[r,p]}$ и они имеют
спектральный тип $l_1$ (см. \cite[Theorems 7.1-7.3]{Trend}; простое доказательство будет дано
ниже в примере \ref{E:main}).
\end{exam}
 
Отметим, что во всех случаях примера \ref{E:sptr} для соответствующих операторов (скажем, $T)$
верна формула следа: $\operatorname{trace}\, T=\operatorname{sp\,tr\,} T.$ Общий результат 
в этом направлении --- предложение \ref{P4}.
А вот результат для частного случая (когда рассматривается семейство всех банаховых пространств).
Он является частным случаем предложения \ref{P4}.

\begin{prop} \label{P:sptype}
Пусть $\alpha$ --- полная проективная квазинорма спектрального типа $l_1.$
Для каждого банахова пространства $X$ со свойством $AP_\alpha$ и для любого $T\in N_\alpha(X),$
имеем: $\operatorname{trace}\, T=\operatorname{sp\,tr\,} T.$ 
\end{prop}

Иногда полезно такое обращение предыдущего предложения (но для произвольной квазинормы).

\begin{prop} \label{P:sptypeno}
Пусть $\alpha$ --- полная проективная квазинорма.
Если для банахова пространства $X$ и для всякого $z\in X^*\widehat\otimes_\alpha X$
выполняется равенство $\operatorname{trace}\, z= \operatorname{sp\,tr\,} \widetilde z,$ то $X$
обладает свойством $AP_\alpha.$
\end{prop}

\begin{proof}
Предположим, что $X$ не имеет своства $AP_\alpha.$ По лемме \ref{L:AP},
найдется такой элемент $z\in X^*\widehat\otimes_\alpha X,$ что
$\operatorname{trace}\, z=1$ and $\widetilde z=0.$ По предположению,
$\operatorname{sp\,tr\,} \widetilde z=\operatorname{trace}\, z=1.$
Противоречие.
\end{proof}
 
\begin{exam} \label{E:main}
Пусть $0<r\le1,$ $1\le p\le2,$ $1/r=1/2+1/p.$

1)\,
Если $T\in N_{[r,p']}(X)$ (см. пример \ref{E:rpq}),
то $T$ допускает факторизацию
\begin{equation}\nonumber
T=BA: X\overset{A}\to l_p\overset{B}\to X,\ \, A\in N_r(X, l_p),\, B\in L(l_p, X).
\end{equation}
Полные системы собственных чисел операторов $T=BA$ and $AB$ совпадают.
Но $AB\in N_r(l_p,l_p).$ Следовательно, $AB$ (и, значит, $T)$ --- спектрального типа $l_1,$
как и всякий $r$-ядерный оператор в $l_p$ \cite[Theorem 7]{Kon}.
Отсюда вытекает, что $N_{[r,p']}$ --- спектрального типа $l_1.$
Легко видеть, что если $z\in X^*\widehat\otimes_{[r,p']} X$ таков, что $\widetilde z=T,$
то $\operatorname{trace}\, z=\operatorname{trace}\, AB$ (напомним, что $l_p$ имеет свойство $AP).$
Но $\operatorname{trace}\, AB=\operatorname{sp\,tr\,} AB$
(это установлено, например, в \cite{RQ1, Trend}, а также следует из
предложения \ref{P:sptype}). Следовательно, для каждого $z\in X^*\widehat\otimes_{[r,p']} X$
имеем: $\operatorname{trace}\, z= \operatorname{sp\,tr\,} \widetilde z.$
По предложению \ref{P:sptypeno},
каждое банахово пространство обладает свойством $AP_{[r,p']}$ $( = AP_{r,\infty,p'},$\,
см. пример \ref{E:rpq}; таким образом, мы дали и доказательство леммы \ref{L:Lp}, 2)
для случая $AP_{[r,p']}).$

2)\,
Если $T\in N^{[r,p']}(X)$ (см. пример \ref{E:rpq}),
то $T$ допускает факторизацию
\begin{equation}\nonumber
T=BA: X\overset{A}\to l_p\overset{B}\to X,\ \, A\in L(X, l_p),\, B\in N_r(l_p, X).
\end{equation}
Как и в 1), мы видим, что для любого $z\in X^*\widehat\otimes^{[r,p']} X$
имеем: $\operatorname{trace}\, z= \operatorname{sp\,tr\,} \widetilde z.$ Далее,
по предложению \ref{P:sptypeno},
каждое банахово пространство имеет свойство $AP^{[r,p]}$ $( = AP^{r,\infty,p'},$\,
см. пример \ref{E:rpq}; таким образом, мы доказали лемму \ref{L:Lp}, 2) для
случая свойств $AP^{[r,p']}).$
\end{exam}
 
Ниже нам понадобится основной результат из работы \cite{Whit}:
  
  $(W)$\, {\it Если $J$ --- квази-банахов операторный идеал спектрального типа $l_1,$ то
  спектральная сумма является следом на этом идеале $J$}.
  
  Напомним (см. определение 2.1 в \cite{Whit}), что  {\it след}\ на операторном идеале $J$ ---
  это класс комплексно-значных функций $\tau,$ каждая из которых (обозначаем снова $\tau$) задана на компоненте
  $J(E,E),$ где $E$ --- произвольное банахово пространство, такой, что
   
   (i)\  $\tau(e'\otimes e)= \langle e',e\rangle$ для всех $e'\in E^*, e\in E;$
   
   (ii)\ $\tau(AU)=\tau(UA)$ для всех банаховых пространств $E, F$ и операторов $U\in J(E,F) and    A\in L(F,E); $
   
   (iii)\ $\tau(S+U)=\tau(S) +\tau(U)$ для всех $S,U\in J(E,E);$
   
   (iv)\ $\tau(\lambda U)= \lambda \tau(U)$ для всех $\lambda\in \Bbb C$ и $U\in J(E,E).$
  
 
\subsection{Свойства $\alpha$-продолжения и $\alpha$-лифтинга}
Следующие определения и предложения понадобятся ниже. Впрочем, они 
представляют и самостоятельный интерес.

\begin{defi} \label{D:ext}
Пусть $\alpha$ --- полная проективная тензорная квазинорма.
Банахово пространство $X$ имеет свойство $\alpha$-продолжения,
если для любого подпространства $X_0\subset X$ и для всякого тензорного элемента
$z_0\in X_0^*\widehat\otimes_\alpha X_0$ существует продолжение $z\in X^*\widehat\otimes_\alpha X_0$
(так что $z\circ i= z_0$ и $\operatorname{trace}\, i\circ z= \operatorname{trace}\, z_0,$ где 
$i: X_0\to X$ --- естественное вложение).
Банахово пространство $X$ имеет свойство $\alpha$-лифтинга, если для всякого подпространства
$X_0\subset X$ и для каждого тензорного элемента $z_0\in (X/X_0)^*\widehat\otimes_\alpha X/X_0$ 
существует лифтинг
$z\in (X/X_0)^*\widehat\otimes_\alpha X$ (так что $Q\circ z=z_0,$ где $Q$ --- фактор-отображение
из $X$ на $X/X_0,$ и $\operatorname{trace}\, z\circ Q= \operatorname{trace}\, z_0).$
\end{defi}
 
\begin{rem} \label{E:remext}
Если $X$ имеет свойство $\alpha$-продолжения, то и каждое его подпространство 
имеет свойство $\alpha$-продолжения. Если пространство $X$ имеет свойство $\alpha$-лифтинга,
то и каждое его фактор-пространство имеет свойство $\alpha$-лифтинга.
\end{rem} 

\begin{exam} \label{E3.4}
Каждое банахово пространство обладает свойствами $||\cdot||_{r,\infty,q}$-продолжения и
$||\cdot||_{r,p,\infty}$-лифтинга (см. пример \ref{E:rpq}).
Для тензорных произведений $(\widehat\otimes_s, ||\cdot||_{s,\infty,\infty}),$ $s\in (0,1],$
все банаховы пространства имеют как свойство $||\cdot||_{s,\infty,\infty}$-продолжения так и
свойство $||\cdot||_{s,\infty,\infty}$-лифтинга.
Это следует из теорема Хана-Банаха и из определения банаховых фактор-пространств.
\end{exam}


\begin{theor} \label{T5.1}
Пусть $\alpha$ --- полная проективная тензорная квазинорма и
банахово пространство $X$ имеет свойство $\alpha$-продолжения.
Если $N_\alpha(X)$ имеет спектральный тип $l_{p,q},$ то всякое его подпространство также имеет
спектральный тип $l_{p,q}.$
\end{theor}

{\it Proof}.\
Пусть $X_0$ ---подпространство в $X$
и $T\in N_\alpha(X_0,X_0).$ Найдется элемент
$z_0\in X_0^*\widehat\otimes_\alpha X_0,$ для которого $\tilde z_0=T.$
По предположению, существует продолжение $z\in X^*\widehat\otimes_\alpha X_0$
(так что $z\circ i= z_0$ и $\operatorname{trace}\, i\circ z= \operatorname{trace}\, z_0,$ где 
$i: X_0\to X$ --- естественное вложение). Рассмотрим диаграмму
\begin{equation}\nonumber
i\tilde z i: X_0\overset{i}\to X\overset{\tilde z}\to X_0\overset{i}\to X.
\end{equation}
Так как $T=\tilde z_0=\tilde z i$ и $\tilde z\in N_\alpha(X,X_0),$ то $i\tilde z\in N_\alpha(X)$ и
спектр $\sp i\tilde z\setminus \{0\}\in l_{p,q}.$ 
Но собственные числа оператора $T$ (с учетом кратностей)  те же, что и собственные числа
оператора $i\tilde z.$ Следовательно, $T$  --- спектрального типа $l_{p,q.}$

\begin{theor} \label{T5.2}
Пусть $\alpha$ --- полная проективная тензорная квазинорма и
банахово пространство $X$ имеет свойство $\alpha$-лифтинга.
Если $N_\alpha(X)$ имеет спектральный тип $l_{p,q},$ то всякое его фактор-пространство также имеет
спектральный тип $l_{p,q}.$
\end{theor}

\begin{proof}
Возьмем подпространство $X_0\subset X$ и рассмотрим фак\-тор-пространство $X/X_0.$
Если $T\in N_\alpha(X/X_0, X/X_0),$ то
найдется элемент $z_0\in (X/X_0)^*\widehat\otimes_\alpha X/X_0)$ такой, что $\tilde z_0=T.$
По предположению, существует тензорный элемент $z\in (X/X_0)^*\widehat\otimes_\alpha X,$
для которого $Q\circ z=z_0,$ где $Q$ --- фактор-отображение из $X$ на $X/X_0.$
Рассмотрим диаграмму
\begin{equation}\nonumber
Q\tilde z Q: X\overset Q\to X/X_0\overset{\tilde z}\to X\overset Q\to X/X_0.
\end{equation}
Так как $T=\tilde z_0=Q \tilde z$ и $\tilde z\in N_\alpha(X,X_0),$ то $\tilde z Q\in N_\alpha(X)$ и
спектр $\sp \tilde z Q\setminus \{0\}\in l_{p,q}.$ 
Но собственные числа оператора $T$ (с учетом кратностей)  те же, что и собственные числа
оператора $\tilde z Q.$ Следовательно, $T$  --- спектрального типа $l_{p,q.}$
\end{proof}

\section{Детерминант и след} \label{S:DT}

Нам понадобятся некоторые вспомогательные факты из теории следов и детерминантов.
Ниже мы доказываем два из них; нам не удалось найти доказательства этих утверждений (именно в том виде, в котором мы хи применяем)
 в литературе, а ссылаться на очень общие аналогичные по виду теоремы, отметив, что
 "доказательства проводятся по той же схеме", не очень хорошо. Бывают случаи, когда как раз
 "общая схема" и не работает.
 Итак два предложения о непрерывности следа и о непрерывности детерминанта.
 
Напомним еще раз, что для всякого конечномерного оператора 
\begin{equation}\nonumber
T: X\to X,\ Tx=\sum_{k=1}^N x'_k\otimes x_k
\end{equation} 
  ядерный след $\tr T:= \sum_{k=1}^N x'_k(x_k)$ вполне определен и не зависит
  от представления $T.$
  Также вполне определен детерминант оператора $1-T:$
\begin{equation}\nonumber
det (1-T)= \prod_j (1-\mu_j),
\end{equation}
  где $(\mu_j)$ --- полный набор собственных чисел оператора $T.$
  В этом случае, естественно, имеем формулу следа.
\begin{equation}\nonumber
\tr T=\sum_j \mu_j.
\end{equation}
  
\begin{prop} \label{P3.1}
 Пусть $A$ --- квазинормированный операторный идеал, 
 $X$ --- банахово пространство,
 для которого множество конечномерных операторов плотно в пространстве $A(X).$ Предположим,
 что стандартный функционал $\operatorname{trace}\,$ ограничен на подпространстве всех конечномерных операторов
 из $A(X)$ (и, таким образом, может быть продолжен до непрерывного следа на все пространство $A(X)$).
 Тогда соответствующий детерминант Фредгольма равномерно непрерывен (по $A$-квазинорме) на
 некотором $A$-шаре подпространства всех конечномерных операторов из $A(X).$ Более того,
 существуют такие постоянные $r_0\in(0,1)$ и $c_0>0,$ что для конечномерных $u,v\in A(X),$
 если $||u||_A\le r_0$ и $||v||_A\le r_0,$ то
\begin{equation}\nonumber
  | \operatorname{det}\,(1-u)-\operatorname{det}\,(1-v)| \le  c_0\, ||u-v||_A.
\end{equation}
\end{prop}

\begin{proof}
Без ограничения общности, мы можем (и будем) предполагать,
 что данная квазинорма в $A$ является $s$-нормой, т. е. существует такое число $s\in (0,1],$ что
 для любых $x,y\in A$ выполняется неравенство $||x+y||_A^s\le ||x||^s_A+||y||^s_A$ (см.
 \cite[p. 1102]{Kalt}). 
 
 Обозначим через $b$ такую постоянную, что $|\operatorname{trace}\, R|\le b||R||_A$ для любого
 конечномерного оператора $R$ из $A.$
Пусть $u,v$ --- два конечномерных оператора из $A$ такие что $||u||^s_A\le r$ и $||v||^s_A\le r,$
где $r>0$ мало. Тогда (см.,  например, теорему I.3.3 в \cite{Goh} или \cite{Gr})   для $|z|\le1$
\begin{equation}\nonumber
 \operatorname{det}\, (1-zu)= \operatorname{exp} \left(-\sum_{n=1}^\infty \frac1n \operatorname{trace}\,(u^n)\, z^n\right),
\end{equation}
откуда, для малых $r>0,$ 
\begin{equation}\nonumber
| \operatorname{det}\,(1-u)-\operatorname{det}\,(1-v)| = |\operatorname{exp}\left(\sum_{n=1}^\infty 
(-\frac1n) \operatorname{trace}\,(u^n)- \sum_{n=1}^\infty (-\frac1n) \operatorname{trace}\,(v^n)\right)|\le
\end{equation}
\begin{equation}\nonumber
 \le c_1 \sum_{n=1}^\infty \frac1n |\operatorname{trace}\,(u^n)-\operatorname{trace}\,(v^n)|\le c_1 b 
 \sum_{n=1}^\infty \frac1n ||u^n-v^n||_A,
\end{equation}
где $c_1$ --- некоторая постоянная.
Если $q:=\max \{||u||_A; ||v||_A\},$ то
\begin{equation}\nonumber
 ||u^n-v^n||^s_A \le ||(u^{n-1}-v^{n-1})u||^s_A + ||v^{n-1}(u-v)||^s_A\le
\end{equation}
\begin{equation}\nonumber
 \le ||u||^s_A\, [||(u^{n-2}-v^{n-2})u||^s_A+ ||v^{n-2}(u-v)||^s_A] + ||v^{n-1}||^s_A\, ||u-v||^s_A
\end{equation}
(мы воспользовались тем, что в $A$ для любых $H,K$ имеет место соотношение $||HK||_A\le ||H||_A\,||K||_A,$
так как $||K||_L\le ||K||_A$); продолжаем неравенства:
\begin{equation}\nonumber
 \le \left(q^{(n-1)s} ||u-v||^s_A+ q^s\cdot q^{s(n-2)} ||u-v||^s_A\right) + q^{2s} ||u^{n-2}-v^{n-2}||\le
\end{equation}
\begin{equation}\nonumber
 \le 2q^{s(n-1)} ||u-v||^s_A + q^{2s} [||(u^(n-3)-v^{n-3})u||^s_A + ||v^{n-3} (u-v)||^s_A]\le
\end{equation}
\begin{equation}\nonumber
 3q^{s(n-1)} ||u-v||^s_A + q^{3s} ||u^{n-3}-v^{n-3}||^s_A\le \cdots 
\end{equation}
\begin{equation}\nonumber
 \le (n-1) q^{s(n-1)} ||u-v||^s_A + q^{s(n-1)} ||u-v||^s_A=
 n q^{s(n-1)} ||u-v||^s_A.
\end{equation}
Таким образом,
\begin{equation}\nonumber
 ||u^n-v^n||_A \le  n^{1/s} q^{(n-1)} ||u-v||_A.
\end{equation}
Поэтому, если $r\in(0,1)$ достаточно мало, и если $||u||^s_A\le r$ и $||v||^s_A\le r,$
то 
\begin{equation}\nonumber
 | \operatorname{det}\,(1-u)-\operatorname{det}\,(1-v) \le c_1 b \sum_{n=1}^\infty \frac1n ||u^n-v^n||_A \le 
\end{equation}
\begin{equation}\nonumber
 c_1 b \sum_{n=1}^\infty {n^{1/s-1}}\,{r^{n-1}} ||u-v||_A=
 c_0\, ||u-v||_A.
\end{equation}
\end{proof}

\begin{cor} \label{C3.1}
В условиях предложения 3.1, функция $\operatorname{det}\, (1-u)$ допускает непрерывное продолжение
(по $A$-квазинорме) с подпространства всех конечномерных операторов из $A(X)$
на все пространство $A(X).$
\end{cor}
\begin{proof}
Это вытекает из равномерной непрерывности 
 (по $A$-квазинорме) на  некотором $A$-шаре подпространства всех конечномерных операторов из $A(X).$ 
 Соответствующее доказательство предоставляется читателю в качестве упражнения (см, однако, \cite[p.28]{Goh}).
 \end{proof}

\begin{prop}
 Пусть $A$ --- квазинормированный операторный идеал 
    $X$ --- банахово пространство,
    для которого множество конечномерных операторов плотно в пространстве $A(X).$ Предположим,
    что стандартный функционал $\operatorname{det}\,(1+u)$ допускает непрерывное продолжение с подпространства всех конечномерных операторов
    из $A(X)$  на все $A(X)$  (по квазинорме из $A(X)$).
    Тогда соответствующий функционал $\operatorname{trace}\,$ ограничен (по $A$-квазинорме) на
    подпространстве всех конечномерных операторов из $A(X)$ и, таким образом,
    продолжается по непрерывности (единственным способом) на все $A(X).$
\end{prop}
   
\begin{proof}
Для конечномерного оператора $u\in A(X),$ \, $\operatorname{det}\,(1+zu)$ имеет вид
\begin{equation}\nonumber
  \operatorname{det}\,(1+zu)=1+z\operatorname{trace}\, u+ \sum_{n=1}^m a_n z^n.
\end{equation}
 Следовательно, по теореме о вычетах,
\begin{equation}\nonumber
  \operatorname{trace}\, u= \frac{1}{2\pi i} \int_{|z|=1} \frac{\operatorname{det}\,(1+zu)-1}{z^2}\, dz.
\end{equation}
 Так как $\operatorname{det}\,(1+zu)$ непрерывен в точке $u=0$ (по квазинорме из $A),$
 то существует $\delta>0,$ что $|\operatorname{det}\,(1+zu)-1|<1$ для  $||u||_A<\delta$ и $|z|\le1;$
 поэтому для таких конечномерных $u$ имеем:
\begin{equation}\nonumber
  |\operatorname{trace}\, u|\le \frac1\pi \int_{|z|=1} \|\frac{\operatorname{det}\,(1+zu)-1}{z^2}\|\, |dz|\le1.
 \end{equation}
\end{proof}   
\begin{rem}
Для доказательства предложения достаточно непрерывности детерминанта в нуле.
\end{rem}  
 
 \section{Спектральный тип и формула следа} 
  
Доказательство следующего факта проводится по аналогии с принципом равномерной ограниченности \cite[3.4.6]{PiEig}.
В отличие от теоремы из \cite{PiEig} мы рассматриваем выделенное семейство банаховых пространств, а не все банаховы пространства.
Это дает нам возможность, например, применять подобный принцип к семействам всех
$L_p(\mu)$-пространств (в качеств пространства с 1-безусловным базисом берется тогда пространство $l_p).$
  
\begin{prop} \label{P3}
Пусть $t,u>0,$ $\alpha$  --- проективная тензорная квазинорма,
  $\mathcal F$ --- некоторое семейство банаховых пространств,
  замкнутое относительно взятия не более чем счетных прямых сумм.
  Если для любого пространства $X\in\mathcal F$ пространство $N_\alpha(X)$ имеет спектральный тип $l_{t,u},$
  то существует такая постоянная $C>0,$ что для всякого $X\in\mathcal F$
  и для любого оператора $T\in N_\alpha(X)$
\begin{equation}\nonumber
||\{\mu_k(T)\}||_{l_{t,u}}\le C||T||_{N_\alpha}
\end{equation}
(здесь $\{\mu_k(T)\}$ --- полный набор собственных значений оператора $T).$
\end{prop}
\begin{proof}
Предположим противное. Тогда для каждого $n$ можно найти такие банахово пространство $X_n\in\mathcal F$
  и оператор $T_n\in N_\alpha(X_n),$ что[PiEig]
\begin{equation}\nonumber
||\{\mu_k(T_n)\}||_{l_{t,u}}\ge n \ \text{и } \    ||T_n||_{N_\alpha}\le (2\nu_\alpha)^{-n}
\end{equation}
  где $\nu_\alpha$ --- постоянная из "неравенства треугольника"  для квазинормы из $N_\alpha.$
  Положим $X:=\left(\sum_{n=1}^\infty X_n\right)_{E},$ и пусть $j_n: X\to X_n,$ $i_n: X_n\to X$ ---
  естественные фактор-отображения и вложения (с единичными нормами).
  Тогда
\begin{equation}\nonumber
  ||\sum_{n=m+1}^{m+l} j_nT_ni_n||_{N_\alpha}\le \sum_{k=1}^\infty \nu_\alpha^k\, ||T_{m+k}||_{N_\alpha}\le (2\nu_\alpha)^{-m}
\end{equation}
  для $l>0.$
  Поэтому $T:= \sum_{n=1}^\infty j_nT_ni_n\in N_\alpha(X).$
  Поскольку $T_n=j_nTi_n,$ то совокупность всех собственных чисел оператора $T_n$ есть часть
  семейства $\{\mu_k(T)\}.$ Из этого вытекает, что
\begin{equation}\nonumber
  \infty>||\{\mu_k(T)\}||_{l_{t,u}}\ge   ||\{\mu_k(T_n)\}||_{l_{t,u}}\ge n \ \text{ для  }\  n=1,2,\dots.
\end{equation}
  Полученное противоречие доказывает предложение.
\end{proof}
  
\begin{prop} \label{P4}
Пусть $r\in (0,1],$ $\alpha$  --- проективная тензорная квазинорма,
$\mathcal F$ --- некоторое семейство банаховых пространств, обладающих свойством $AP_\alpha,$ 
замкнутое относительно взятия не более чем счетных прямых сумм.
Если для любого пространства $X\in\mathcal F$ пространство $N_\alpha(X)$ имеет спектральный тип $l_r,$
то для всякого $X\in\mathcal F$
и для любого оператора $T\in N_\alpha(X)$ его ядерный след $\operatorname{trace}\, T$ вполне определен и
совпадает с его спектральным следом, т. е.
\begin{equation}\nonumber
\operatorname{trace}\, T=\sum_{k=1}^\infty \mu_k(T)
\end{equation}
(здесь $\{\mu_k(T)\}$ - полный набор, с учетом кратностей, собственных значений оператора $T).$
При этом, детерминант Фредгольма оператора $T$ имеет вид
\begin{equation}\nonumber
\operatorname{det}\,(1-zT)=\prod_{k=1}^\infty (1-\mu_k(T)z)
\end{equation}
и является целой функцией порядка $r$ (и, следовательно, минимального рода, если $r<1).$
\end{prop}
\begin{proof}
Пусть $T\in N_\alpha(X),$ где  $X\in\mathcal F.$ Так как $X\in AP_\alpha,$ то $N_\alpha(X)=X^*\widehat\otimes_\alpha X,$ что
гарантирует существование единственного непрерывного следа на $N(X),$ который есть
просто непрерывное продолжение с подпространства всех конечномерных операторов в $X$
обычного функционала "след".
По следствию \ref{C3.1} из предложения \ref{P3.1}, на $N_\alpha(X)$ вполне определен единственный непрерывный детерминант
(Фредгольма), --- $\operatorname{det}\,(1-zT).$
Возьмем последовательность $\{T_n\}$ конечномерных операторов из $N_\alpha(X),$ сходящуюся
в  пространстве $N_\alpha(X)$  к $T.$

Пространство $N_\alpha(X)$ имеет спектральный тип $l_r,$ так  что, по предложение \ref{P3},
существует такая постоянная $C>0,$ что
для любого оператора $T\in N_\alpha(X)$
\begin{equation}\nonumber
||\{\mu_k(T)\}||_{l_1}\le C||T||_{N_\alpha},
\end{equation}
в частности, это неравенство верно для всех рассматриваемых операторов.
Для конечномерного $U\in N_\alpha(X)$ детерминант имеет вид
\begin{equation}\nonumber
\operatorname{det}\,(1-zU)=\prod_{i=1}^M (1-\mu_i(U)z).
\end{equation}
Отсюда, для всякого $T_n$
\begin{equation}\nonumber
|\operatorname{det}\,(1-zT_n)|\le \operatorname{exp}\{\sum_k |\mu_k(T_n)|\, |z|\} \le e^{C||T_n||_{N_\alpha}\, |z|}.
\end{equation}
  Используя непрерывность детерминанта, мы приходим к неравенству
\begin{equation}\nonumber
   |\operatorname{det}\,(1-zT)|\le e^{C||T||_{N_\alpha}\, |z|}.
\end{equation}

По теореме Адамара,
\begin{equation}\nonumber
\operatorname{det}\,(1-zT)=e^{cz} \prod_{i=1}^\infty (1-\mu_i(T)z) e^{\mu_i(T)z}
\end{equation}
(так как значение левой части в нуле есть 1). С другой стороны, разлагая правую часть 
равенства в ряд, получаем $\operatorname{det}\,(1-zT)=1+cz+\dots.$ Значит, $c=-\tr T$
(напомним, что $\operatorname{det}\,(1-zT)=1-\tr T z+\dots.)$
Но $\{\mu_k(T)\} \in l_1$ и, следовательно.
\begin{equation}\nonumber
\operatorname{det}\,(1-zT)=e^{az} \prod_{i=1}^\infty (1-\mu_i(T)z),
\end{equation}
где $a=-\tr T+\sum \mu_i.$ 

Теперь мы применим теорему Уайта (см. \cite{Whit}). Для этого рассмотрим банахов идеал $\Gamma_{\mathcal F}$
операторов, факторизующихся через пространства из $\mathcal F$ и образуем квазинормированный.
операторный идеал $\Gamma_{\mathcal F}\circ N_\alpha$ --- суперпозицию двух идеалов.
Ясно, что этот идеал имеет спектральный тип $l_1.$ следовательно, к нему может быть применена
теорема Уайта. Поскольку идеал конечно-мерных операторов плотен в последнем
идеале, спектральный след на нем есть линейный непрерывный функционал, совпадающий с ядерным
следом на плотном множестве. 
Поэтому $a=0,$\, $\operatorname{trace}\, T=\sum_{i=1}^\infty \mu_i(T)$ и
\begin{equation}\nonumber
\operatorname{det}\,(1-zT)=\prod_{k=1}^\infty (1-\mu_k(T)z).
\end{equation}
Порядок этой целой функции есть $r,$ поскольку $(\mu_k(T))\in l_r$ (теорема Бореля 
о порядке канонического произведения).
\end{proof}

\section{Примеры применения} 
Теперь применим полученные выше вспомогательные факты в некоторых конкретных
ситуациях.     
     
\subsection{Операторы в подпространствах факторпространств $L_p$} 
Сначала рассмотрим случай ядерных операторов в подпространствах факторпространств пространств $L_p(\mu).$
 Известно, что такие пространства обладают свойством аппроксимации $AP_s$ при
 $1\le p\le \infty$ и $0<s<1,$ $1/s=1+|1/p-1/2|$ (Reinov-Latif, 2013, 2014).
 Используя этот факт и некоторые идеи (как оказалось, те же, что и в \cite[2.b.13]{Kon86}) 
 из теории абсолютно суммирующих операторов, Рейнов и Латиф сначала получили
 формулу Гротендика-Лидского для подпространств пространств $L_p$ (в работе \cite{RQ1}),
 а затем и для подпространств фактор-пространств  пространств $L_p$ (см. \cite{RQ14}).
 
 В \cite[2.c.9]{Kon86}, однако, получены более сильные результаты о спектрах ядерных операторов в
 $L_p$ (но не формула следа). Мы применим приведенные выше теоремы и предложения
 вместе с результатом из \cite[2.c.9]{Kon86} для
 установления более общих фактов, а также снова все той же формулы следа для операторов в
 подпространствах фактор-пространств  пространств $L_p.$
На прежде мы приведем утверждение, усиливающее указанный выше факт о наличии
свойств $AP_s$ в таких пространствах (что представляет и самостоятельный интерес).
Нам понадобится такая простая 
 
\begin{lem} \label{Ls1}
Пусть $0<s<1, 1/s=1+1/q.$ Если $d:=(d_k)\in l_{(s,1)},$ то найдутся 
$\alpha:= (\alpha_k)\in l_1$ и $\beta:= (\beta_k)\in l^0_{(q, \infty)},$ для которых
$d=\alpha \beta,$ т. е. $d_k=\alpha_k\beta_k$ для $k=1,2,\dots.$ Здесь
$l^0_{(q,\infty)}:=\{(\beta_k):\ \exists a_k\to0, |\beta_k|\le {a_k}/{k^{1/q}}\}.$
Обратно, если $\alpha:= (\alpha_k)\in l_1$ и $\beta:= (\beta_k)\in l^0_{(q, \infty)},$ то
$\alpha \beta\in l_{(s,1)}.$
Более того, $l_1\cdot l_{(q, \infty)}=l_{(s,1)}.$
\end{lem}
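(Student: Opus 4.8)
The plan is to establish the two inclusions $l_{(s,1)}\subseteq l_1\cdot l^0_{(q,\infty)}$ and $l_1\cdot l_{(q,\infty)}\subseteq l_{(s,1)}$; since $l^0_{(q,\infty)}\subseteq l_{(q,\infty)}$, these together yield every assertion of the lemma. I will use throughout that $1/s-1=1/q$, so that $||d||_{(s,1)}=\sum_{n}d^*_n\,n^{1/q}$.

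\emph{Easy inclusion (a H\"older inequality for Lorentz sequences).} First I would recall the submultiplicativity of the decreasing rearrangement for pointwise products, $(\alpha\beta)^*_{m+n-1}\le\alpha^*_m\,\beta^*_n$ for all $m,n\ge1$; this holds because $\{k:|\alpha_k\beta_k|>\alpha^*_m\beta^*_n\}\subseteq\{k:|\alpha_k|>\alpha^*_m\}\cup\{k:|\beta_k|>\beta^*_n\}$, a set of at most $(m-1)+(n-1)$ elements. Taking $m=n$ gives $(\alpha\beta)^*_j\le\alpha^*_{\lceil j/2\rceil}\beta^*_{\lceil j/2\rceil}$ for every $j$, and since $\beta^*_{\lceil j/2\rceil}\le||\beta||_{(q,\infty)}\lceil j/2\rceil^{-1/q}\le 2^{1/q}||\beta||_{(q,\infty)}\,j^{-1/q}$ I obtain
\[
||\alpha\beta||_{(s,1)}=\sum_{j\ge1}(\alpha\beta)^*_j\,j^{1/q}\le 2^{1/q}||\beta||_{(q,\infty)}\sum_{j\ge1}\alpha^*_{\lceil j/2\rceil}\le 2^{1+1/q}||\beta||_{(q,\infty)}\,||\alpha||_{l_1},
\]
so $\alpha\beta\in l_{(s,1)}$. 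This proves $l_1\cdot l_{(q,\infty)}\subseteq l_{(s,1)}$, and a fortiori the middle sentence of the lemma ($\alpha\in l_1$, $\beta\in l^0_{(q,\infty)}\Rightarrow\alpha\beta\in l_{(s,1)}$).

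\emph{The factorization $l_{(s,1)}\subseteq l_1\cdot l^0_{(q,\infty)}$.} Let $d\in l_{(s,1)}$. If $d$ has finite support, then $\alpha:=d\in l_1$ and $\beta:=$ (indicator of $\{k:d_k\neq0\}$) is finitely supported, hence $\beta^*_n n^{1/q}=0$ for large $n$ and $\beta\in l^0_{(q,\infty)}$; so assume $d$ has infinitely many nonzero coordinates. Then $d\in c_0$, all $d^*_n>0$, and (listing the positive numbers $|d_k|$ in non-increasing order) there is a bijection $\tau$ of $\mathbb N$ onto $\{k:d_k\neq0\}$ with $|d_{\tau(n)}|=d^*_n$. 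Next I factor the decreasing sequence $d^*$ itself: put $c_n:=d^*_n\,n^{1/q}\ge0$, so $\sum_n c_n=||d||_{(s,1)}<\infty$, and $R_n:=\sum_{j\ge n}c_j$, a positive sequence decreasing to $0$; the Abel/integral estimate
\[
\sum_{n\ge1}\frac{c_n}{\sqrt{R_n}}=\sum_{n\ge1}\frac{R_n-R_{n+1}}{\sqrt{R_n}}\le\sum_{n\ge1}\int_{R_{n+1}}^{R_n}\frac{dt}{\sqrt t}=2\sqrt{R_1}<\infty
\]
then shows that $A_n:=d^*_n\,n^{1/q}\,R_n^{-1/2}$ defines an element of $l_1$, while $B_n:=d^*_n/A_n=n^{-1/q}R_n^{1/2}$ is \emph{decreasing} and satisfies $B^*_n n^{1/q}=R_n^{1/2}\to0$, i.e. $B\in l^0_{(q,\infty)}$. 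Finally I transfer the factorization back to $d$: set $\alpha_{\tau(n)}:=\bigl(d_{\tau(n)}/|d_{\tau(n)}|\bigr)A_n$, $\beta_{\tau(n)}:=B_n$, and $\alpha_k=\beta_k=0$ for $k\notin\tau(\mathbb N)$. Then $\alpha_k\beta_k=d_k$ for all $k$, $\sum_k|\alpha_k|=\sum_n A_n<\infty$, and the nonzero coordinates of $\beta$ are exactly the positive numbers $B_n$, so $\beta^*=B$ and hence $\beta\in l^0_{(q,\infty)}$. Thus $d=\alpha\beta\in l_1\cdot l^0_{(q,\infty)}$.

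Combining the two inclusions gives $l_{(s,1)}\subseteq l_1\cdot l^0_{(q,\infty)}\subseteq l_1\cdot l_{(q,\infty)}\subseteq l_{(s,1)}$, so all three sets coincide and every claim of the lemma follows. I expect the only genuinely delicate point to be the passage from a general $d$ to its rearrangement $d^*$: after applying $\tau$ the second factor must still lie in $l^0_{(q,\infty)}$ no matter how $\tau$ spreads out the nonzero coordinates, and this is precisely why the factorization of $d^*$ is arranged so that $B$ is decreasing (then $\beta^*=B$ automatically, independently of $\tau$). The quantitative core of the argument is the convergence of $\sum_n c_n R_n^{-1/2}$ above, i.e. the classical fact that a summable nonnegative series may be multiplied by a sequence tending to $+\infty$ and remain summable.
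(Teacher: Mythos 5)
Your treatment of the factorization direction is essentially the paper's own argument: the paper assumes $d=d^*$, chooses $\varepsilon_k\searrow 0$ with $\sum_k d^*_k k^{1/q}/\varepsilon_k<\infty$, and puts $\alpha_k=d^*_k k^{1/q}/\varepsilon_k$, $\beta_k=\varepsilon_k/k^{1/q}$; your tail-sum construction is just the explicit choice $\varepsilon_n=R_n^{1/2}$, and your Abel estimate $\sum_n (R_n-R_{n+1})R_n^{-1/2}\le 2\sqrt{R_1}$ correctly supplies the classical fact the paper leaves implicit. For the reverse inclusion the paper merely cites Pietsch's multiplication theorem for Lorentz sequence spaces (\cite[2.1.13]{PiEig}), whereas you prove it directly from $(\alpha\beta)^*_{m+n-1}\le\alpha^*_m\beta^*_n$; that computation, including the constant $2^{1+1/q}$, is correct and makes the lemma self-contained, which is a genuine (if modest) improvement.

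The one point where you diverge from the paper is the passage from general $d$ to $d^*$, and there your last inference does not match the lemma as literally stated. The lemma defines $l^0_{(q,\infty)}$ coordinatewise: $|\beta_k|\le a_k/k^{1/q}$ with $a_k\to0$, equivalently $|\beta_k|\,k^{1/q}\to0$ along the index $k$, not along the rearrangement. Your transferred $\beta$ satisfies $\beta^*_n n^{1/q}=R_n^{1/2}\to0$, but $|\beta_{\tau(n)}|\,\tau(n)^{1/q}=(\tau(n)/n)^{1/q}R_n^{1/2}$ can blow up when the support of $d$ is sparse: for instance $d_k=4^{-m}$ if $k=2^{2^m}$ and $d_k=0$ otherwise lies in $l_{(s,1)}$, yet any factorization $d=\alpha\beta$ with $|\beta_k|\le a_k/k^{1/q}$ and $(a_k)$ bounded forces $|\alpha_{2^{2^m}}|\ge c\,4^{-m}2^{2^m/q}\to\infty$, so no such factorization exists at all. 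Thus, with the coordinatewise definition, the ``general $d$'' form of the first assertion is false, and the step ``$\beta^*=B$, hence $\beta\in l^0_{(q,\infty)}$'' is exactly where your argument breaks; the paper sidesteps this by proving the lemma only for $d=d^*$, which suffices for its application, since the tensor representation there can be re-indexed so that the coefficient sequence is non-increasing. Your proof is correct if $l^0_{(q,\infty)}$ is read rearrangement-invariantly (as $\beta^*_n n^{1/q}\to0$), and for non-increasing $d$ it coincides with the paper's; you should either state that reading explicitly or restrict the factorization claim to $d=d^*$ as the paper does. Your proof of the final identity $l_1\cdot l_{(q,\infty)}=l_{(s,1)}$ is unaffected, since $l_{(q,\infty)}$ is defined via rearrangements.
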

\begin{proof}
Возьмем $d\in l_{(s,1)}$ (предполагая, что $d=d^*=(d^*_k)$). Тогда
$\sum_{k=1}^\infty k^{1/s}\, d^*_k/k <\infty,$ т. е. $\sum_{k=1}^\infty k^{1/q}\, d^*_k <\infty.$
Пусть $\varepsilon=(\varepsilon_k)$ --- числовая последовательность такая, что $\varepsilon_k\searrow 0$ и
$\sum_{k=1}^\infty d^*_k k^{1/q}/\varepsilon_k<\infty.$ Положим
$\alpha_k:= {d^*_k k^{1/q}}/{\varepsilon_k}, \,  \ \beta_k:= {\varepsilon_k}/{k^{1/q}}.$
Тогда $\alpha:= (\alpha_k)\in l_1$ и $\beta:= (\beta_k)\in l^0_{(q, \infty)}.$ Таким образом,
$d=\alpha \beta\in l_1\cdot l_{(q, \infty)}^0.$
По поводу последних двух утверждений, см. \cite[2.1.13]{PiEig}.
\end{proof}
 
\begin{prop}
Пусть $\alpha\in [0,1/2]$ и  $1/s=1+\alpha.$ Для банахова пространства $Y,$ предположим, что
 
 $(\alpha)$\ существует такая постоянная $C>0$
что для каждого $\varepsilon>0,$ для любого натурального $n$
и для всякого $n$-мерного подпространства $E$ пространства $Y$
существует конечномерный оператор $R$ в $Y$ такой, что
$||R||\le Cn^{\alpha}$ and $||R|_E- {id}_E||_{L(E,Y)}\le \varepsilon.$
 
Тогда $Y\in AP_{(s,1)}.$
\end{prop}
\begin{proof}

Пусть $0\neq z\in Y^*\widehat\otimes_{(s,1)} X.$ Воспользуемся леммой \ref{Ls1}: возьмем представление
$z=\sum_{k=1}^\infty a_kb_k\, y'_k\otimes x_k,$ в котором 
$(x_k), (y'_k)$ ограничены, $(a_k)\in l_1,$ $(b_k)\in l^0_{q\infty}$ и $b_k\searrow0.$
Тогда $(\widetilde x_k:=b_kx_k)\in l^0_{q \infty}(X)$ и, для достаточно малого $\varepsilon>0$
(которое будет выбрано ниже), можно найти оператор $R\in X^*\otimes X$ с тем свойством, что
$\sup_n ||R\widetilde x_n-\widetilde x_n||\le \varepsilon$ (здесь мы использовали свойства рассматриваемого 
пространства $X,$ отмеченные в разделе 1 работы \cite{Trend}).
Так как $z\neq0,$ то можно найти оператор $V\in L(Y^*, X^*)$ такой, что
$\sum_{k=1}^\infty a_k\, \langle Vy'_k, \widetilde x_k\rangle=1.$
Теперь, когда оператор $V$ выбран, получаем:
\begin{equation}\nonumber
 1= \sum_{k=1}^\infty a_k\, \langle Vy'_k, \widetilde x_k-R\widetilde x_k\rangle +\sum_{k=1}^\infty a_k\, \langle Vy'_k, R\widetilde x_k\rangle
\end{equation}
\begin{equation}\nonumber
\le \varepsilon\, ||(a_k)||_{l_1}\,  ||V||\cdot const + |\sum_{k=1}^\infty a_kb_k\, \langle R^*Vy'_k,  x_k\rangle|,
\end{equation}
 и, если $\varepsilon$ достаточно мало, для конечномерного оператора $R^*V: Y^*\to X^*$ имеем:
\begin{equation}\nonumber
 |\tr z^t\circ (R^*V)|= |\tr (R^*V)\circ z^t|= |\sum_{k=1}^\infty a_kb_k\, \langle R^*Vy'_k, x_k\rangle|>0.
\end{equation}
 Последняя сумма есть ядерный след тензорного элемента $\sum_{k=1}^\infty a_kb_k\, R^*Vy'_k\otimes x_k,$
 который является композицией $R\circ z_0$ конечномерного оператора $R$ и тензорного элемента
 $z_0:=\sum_{k=1}^\infty a_kb_k\, Vy'_k\otimes x_k,$ принадлежащего тензорному произведению
 $X^*\widehat\otimes_{(s,1)} X$ по второй части леммы \ref{Ls1}. Отсюда следует, что как $z_0,$
 так и $z$ порождают ненулевые операторы $\widetilde z_0$ и $\widetilde z.$
 \end{proof}
 
Свойствами $(\alpha)$ обладают, в частности, факторпространства подпространств и
подпространства факторпространств пространств $L_p$ (при $1\le p\le\infty$ с $\alpha=|1/2-1/p|$);
см. обсуждение этого в разделе 1 статьи \cite{Trend}, а также в \cite[Предложение 9]{RQ14}.
Поэтому, в частности, получаем
 
 \begin{cor}
 Пусть $s\in (0,1],$ $p\in [1,\infty]$ и $1/s=1+|1/p-1/2|.$
 Если банахово пространство $Y$ изоморфно подпространству факторпространства
 (или факторпространству подпространства) некоторого $L_p$-пространства, то
 оно обладает свойством $AP_{(s,1)}$ (и, следовательно, свойством $AP_{s}).$
 \end{cor}

Кстати, при $s=2/3$ получаем уже упоминавшийся результат о наличии свойства $AP_{(2/3,1)}$
у любого банахова пространства.

H. K\"onig \cite[2.c.9]{Kon86} показал. что если $1<p<\infty$ и $0<s<1, 1/r=1/s-|1/p-1/2|,$ то для $X=L_p(\mu),$ то
 собственные значения любого оператора $T\in N_{s}(X)$ лежат в $l_{(r,s)}.$
 Поэтому,
 получаем небольшое усиление ранее полученных теорем (Латиф-Рейнов, 2013-2016) о
 ядерных операторах в подпространствах факторпространств пространств $L_p(\mu):$
  
 Итак, обобщение предложения \cite[2.c.9]{Kon86}:
 
\begin{prop} \label{P5}
Пусть $1<p<\infty$ и $0<s<1, 1/r=1/s-|1/p-1/2|.$ 
Существует такая постоянная $C_{s,p}>0,$ что для всякого подпространства $X$ 
любого факторпространства  пространства $L_p(\mu)$
и для любого оператора $T\in N_{s}(X)$
\begin{equation}\nonumber
||\{\mu_k(T)\}||_{l_{(r,s)}}\le C_{s,p}||T||_{N_{s}}.
\end{equation}
(здесь $\{\mu_k(T)\}$ --- полный набор собственных значений оператора $T).$ 
При $r=1$ и $1=1/s - |1/p-1/2|$ 
полный набор собственных значений оператора $T$ абсолютно суммируем, 
для любого оператора $T\in N_s(X)$ его ядерный след $\operatorname{trace}\, T$ вполне определен
и совпадает с его спектральным следом, т. е.
\begin{equation}\nonumber
\operatorname{trace}\, T=\sum_{k=1}^\infty \mu_k(T).
\end{equation}
 \end{prop}
 \begin{proof}
Любое банахово пространство имеет как свойство $||\cdot||_s$-продолжения, так и свойство
$||\cdot||_s$-лифтинга (см. пример \ref{E3.4}).  Из \cite[2.c.9]{Kon86} следует, что $L_p$-пространства имеют
спектральный тип $l_{(r,s)}.$ По теоремам \ref{T5.1} и \ref{T5.2}, как подпространства, так и факторпространства
$L_p$-пространств имеют спектральный тип $l_{(r,s)}.$ По тем же теоремам их, соответственно,
факторпространства и подпространства также имеют имеют спектральный тип $l_{(r,s)}.$
Применим предложение \ref{P3} к семействам факторпространств подпространств и подпространств
факторпространств пространств $L_p,$ рассматривая прямые суммы по типу $l_p.$ Получим
нужные нам неравенства. Применяя в этих же ситуациях предложение \ref{P4} (учитывая наличие
свойств $AP_s),$ получаем формулы следа.
\end{proof}
 

\subsection{Операторный идеал $N_{(r,s),p}$}
Пусть $0<r,s\le1, 1\le p\le2.$ Определим новую проективную квазинорму $||\cdot||_{(r,s),p}$
следующим образом. Если $u\in X\widehat\otimes Y,$ то
\begin{equation}\nonumber
\|u\|_{{(r,s),p}}:=\inf\left\{\|(\lambda_i)_{i=1}^{\infty}\|_{l_{(r,s)}} \|(x_{i})_{i=1}^{\infty}\|_{l_{\infty}(X)} \cdot\|(y_{i})_{i=1}^{\infty}\|_{l_{p'}^{w}(X)}:\ u=\sum_{i=1}^{\infty} \lambda_i x_{i}\otimes y_{i}\right\}
\end{equation}

Получаем новое тензорное произведение $X\widehat\otimes_{(r,s),p} Y,$ состоящее из тензорных элементов
$u\in X\widehat\otimes Y$ конечной квазинормы $||\cdot||_{(r,s),p}.$ Оно квази-банахово (проверяется
стандартным образом на абсолютно сходящихся рядах) и является частичным обобщением
тензорного произведения Лапресте \cite{166}. 

Естественным образом мы приходим к квазинормированному операторному идеалу 
$N_{(r,s),p},$ рассматривая фактор-отображения $X^*\widehat\otimes_{(r,s),p} Y\to N_{(r,s),p}(X,Y).$
Всякий оператор из этого идеала допускает соответствующее разложение в ряд.
Мы применим полученные выше факты (аналогично случаю операторов в подпространствах
$L_p$-пространств) к операторам из этого нового операторного идеала типа Лапресте (идеала
Лоренца-Лапресте).

\begin{rem}
Более общим является тензорное произведение $X\widehat\otimes_{(r,s),p,q} Y,$ получаемое
аналогичным образом, но с дополнительным ограничением на последовательность $(y_{i}):$
требуется, чтобы эта последовательность была слабо $q$-суммируемой, где $1\le q<\infty.$
Мы не рассматриваем его здесь в силу ограниченности объема работы.
\end{rem}

Перейдем теперь к операторам из $N_{(r,s),p}.$

\begin{prop}
Если $1\le p\le2, 1/r=1/p+1/2,$ то  всякое банахово пространство обладает свойством $AP_{(r,1),p}.$ 
\end{prop}
 
Отметим, что при $p=1$ свойство $AP_{(r,1),p}$ превращается в $AP_{(2/3,1)},$ о наличия которого в любом банаховом пространстве известно из
работ \cite{Trend}   и \cite{HP10}.
  
\begin{cor}
Если $0<r\le1, 1/r=1/p+1/2, 1\le p\le2$ и $0<s\le1,$ то $N_{(r,s),p}\subset N_{(r,1),p}$
и, следовательно,  всякое банахово пространство обладает свойством $AP_{(r,s),p}.$
\end{cor}

 \begin{prop}
Идеал $N_{(r,s),p}$ имеет спектральный тип $l_{(1,s)}.$
\end{prop}

 \begin{proof}
Мы приведем доказательство, в ходе которого будут получены все три сформулированные выше утверждения.
 Пусть $0<r\le1, 1/r=1/p+1/2, 1\le p\le2$ и $0<s\le1.$ Предположим, что
$X\notin AP_{(r,s),p}.$
Пусть $z\in X^*\widehat\otimes_{(r,s),p} X$ --- такой элемент, что
 $\operatorname{trace}\, z=1, \tilde z=0.$
 
\begin{equation}\nonumber
 z=\sum_{k=1}^\infty \lambda_k x'_k\otimes x_k,
\end{equation}
 где $(\lambda_k)\in l_{r,s},$ $(x'_k)\in l_\infty(X^*) $ et $ (x_k) $ --- слабо $ p'$-суммируема.
 
 Поскольку $z=\sum \lambda_k x'_k\otimes x_k,$ где $(\lambda_k)\in l_{r,s},$ $(x'_k)\in l_\infty(X^*) $ et $ (x_k) $ --- слабо $ p'$-суммируема, то $ \tilde z $ может быть факторизован как
\begin{equation}\nonumber
 \tilde z:\ X\overset{A}\to l_\infty \overset{\Delta}\to l_1 \overset{j}\to l_p \overset{V}\to X,
\end{equation}
 где $Ax=\{\langle x'_k,x\rangle\}\in l_\infty$ для $x\in X,$ $V\{\delta_k\}:= \sum \delta_k x_k$ для 
 $\{\delta_k\}\in l_p,$
 $ j $ --- вложение, $ \Delta $ --- диагональный оператор с диагональю $(\lambda_k)$ из $ l_{(r,s)}. $
 Так как $\tilde z=0,$ то $V|_{j\Delta A(X)}=0.$ Рассмотрим $S:= j\Delta AV: l_p\to l_p:$
 \begin{equation}\nonumber
 S\{\delta_k\}= \sum_{k=1}^\infty \delta_k j\Delta Ax_k= \sum_{k=1}^\infty \delta_k  j\Delta(\sum_{j=1}^\infty \langle x'_j,x_k\rangle e_j)=
 \sum_{k,j=1}^\infty \lambda_j \langle x'_j, \delta_k x_k\rangle e_j = 
 \end{equation}
\begin{equation}\nonumber
\sum_{j=1}^\infty \lambda_j \langle x'_j, \sum_{k=1}^\infty \delta_k x_k\rangle e_j  
 = \sum_{j=1}^\infty \lambda_j \langle \{\delta_k\}_k, \{\langle x'_j,x_k\rangle\}_k\rangle e_j  
 \in l_p
\end{equation}
 для $\{\delta_k\}\in l_p,$    
 Положим $\langle \{\delta_k\}_k, \{\langle x'_j,x_k\rangle\}_k\rangle=: \psi_j(\delta).$ Тогда 
 $ S\{\delta_k\}=  \sum_{j=1}^\infty \lambda_j \psi_j(\delta) e_j.$  Следовательно,
\begin{equation}\nonumber
\tr S= \sum_j \lambda_j \psi_j(e_j)= \sum_j \lambda_j \langle x'_j, x_j\rangle=1.
\end{equation} 
Очевидно, $S^2=0$ и $\operatorname{trace}\, S=\operatorname{trace}\, z=1.$
  
Рассмотрим диагональный оператор $j\Delta: l_\infty\to l_p$ с диагональю из $ l_{(r,s)}.$
Из \cite[2.9.17*]{PiEig} следует, что этот оператор
есть оператор вейлевского (а значит и спектрального) типа $(1,s)$ (\cite[3.6.2*]{PiEig}; подробно об операторах Вейля
см. указанную монографию). Следовательно,  идеал $N_{(r,s),p}$ имеет спектральный тип $l_{1,s}.$
 
Поскольку $S\in N_{(r,1),p}(l_p,l_p):$  
\begin{equation}\nonumber
 S:   l_p \overset{V}\to X \overset{A}\to l_\infty \overset{\Delta}\to l_1 \overset{j}\to l_p,
\end{equation}
 $N_{(r,1),p}$ имеет спектральный тип $l_{1}$
и пространство $l_p$ обладает свойством аппроксимации Гротендика (а, значит, и свойством
$AP_{(r,1),p}),$  то ядерный след $\operatorname{trace}\, S $
вполне определен и равен сумме всех собственных значений оператора $S$
(по предложению \ref{P4}, в котором сейчас $\mathcal F$ есть семейство всех банаховых пространств).
Противоречие с тем, что $S^2=0.$
\end{proof}
  
Применяя предложения \ref{P3} и \ref{P4} для рассматриваемой ситуации, получаем:
  
\begin{theor}
  Пусть $0<r\le1, 1/r=1/p+1/2,  1\le p\le2$ и $0<s\le1.$
   Существует такая постоянная $C>0,$ что для всякого банахова пространства $X$
  и для любого оператора $T\in N_{(r,s),p}(X)$
\begin{equation}\nonumber||\{\mu_k(T)\}||_{l_{(1,s)}}\le C||T||_{N_{(r,s),p}}
\end{equation}
  (здесь $\{\mu_k(T)\}$ --- полный набор собственных значений оператора $T).$
  В частности, 
  полный набор собственных значений оператора $T$ абсолютно суммируем, 
 его ядерный след $\operatorname{trace}\, T$ вполне определен
  и совпадает с его спектральным следом, т. е.
\begin{equation}\nonumber\operatorname{trace}\, T=\sum_{k=1}^\infty \mu_k(T).
\end{equation}
 \end{theor}

Отметим {\it частные случаи  теоремы для $N_{(r,s),p}:$}
  
Пусть  $0<r\le1, 1/r=1/p+1/2, 1\le p\le2$ и $0<s\le1.$
  \bigskip
  
  a)\,
  $r=1, s=1, p=2:$\ В.Б. Лидский (1959), А. Пич (1980)
   \medskip
   
  b)\,
  $r=2/3, s=2/3, p=1:$\ А. Гротендик (1955)
   \medskip
   
  c)\,
  $r=2/3, s=1, p=1:$\ А. Хинрихс и А. Пич (2010) и, независимо, автор (2016)
   \medskip
   
  d)\,
  $0\le r\le1, s=r, 1/r=1/2+1/p:$\  О. Рейнов и К. Латиф (2013)
 \medskip
 
 Теорема соединяет в одной шкале операторов частные случаи c) и a):
 
\begin{equation}\nonumber
 \{r=2/3, s=1, p=1\} \longrightarrow \{2/3\le r\le1, s=1, 1/r=1/p+1/2\}
 \end{equation}
\begin{equation}\nonumber\longrightarrow \{r=1, s=1, p=2\}
\end{equation}

{\it  О точности результатов}
 
 Все результаты, приведенные  до теоремы об $N_{(r,s),p},$ точны. Теорема точна для случаев, когда $r=s.$
 Для $r\neq s$ проблема возникает уже в частном случае $N_{(2/3,1)}$
 (т. е. при $p=1).$ 
  
 Из статьи А. Хинрихса и А. Пича \cite{HP10} (2010) в нашей формулировке:
 \medskip

 Верно ли что в шкале пространств Лоренца $l_{r,s}$
 результат <<{\it любое банахово пространство обладает свойством} $AP_{(2/3,1)}$>> 
 есть наилучший результат?


 \end{document}